\newtheorem{theo}{Theorem}[section]
\newtheorem{lem}[theo]{Lemma}
\newtheorem{ex}[theo]{Example}
\theoremstyle{definition}
\newtheorem{rem}[theo]{Remark}
\newcommand{\Q}{\ensuremath{\mathbb{Q}}}
\newcommand{\R}{\ensuremath{\mathbb{R}}}
\newcommand{\A}{\ensuremath{\aleph}}
\newcommand{\M}{\ensuremath{\mathcal{M}}}
\newcommand{\vs}{\vspace{0.3cm}}
\DeclareMathOperator{\dcl}{dcl}
\newcommand{\book}[2]{{\scshape#1}, {\bf #2}}
\newcommand{\publ}[6]
{{\scshape#1}, #2, {\itshape #3}, {\bf #4} (#5), pp.~#6.}
\title[]{A note on $\aleph_{\alpha}$-saturated o-minimal expansions of real closed fields}
\author{Paola D'Aquino}
\address{Dipartimento di Matematica, Seconda Universit\`a di Napoli, Italy}
\email{paola.daquino@unina2.it}
\author{Salma Kuhlmann}
\address{FB Mathematik \& Statistik, Universit\"at Konstanz,
Germany}
 \email{salma.kuhlmann@uni-konstanz.de}
\date{\today}
\subjclass[2000]{ Primary: 06A05, 12J10, 12J15, 12L12, 13A18;
Secondary: 03C60, 12F05, 12F10, 12F20.} \keywords{power bounded
o-minimal theory, convex valuations, archimedean components, field
of exponents, ordered vector spaces, valuation inequality,
coarsening of a valuation}
\begin{document}
\dedicatory {Dedicated to Professor Yurii Ershov on his 75th
birthday}

\begin{abstract}
We give necessary and sufficient conditions for a polynomially
bounded o-minimal expansion of a real closed field (in a language of
arbitrary cardinality) to be $\aleph_{\alpha}$-saturated. The
conditions are in terms of the value group, residue field, and
pseudo-Cauchy sequences of the natural valuation on the real closed
field. This is achieved by an analysis of types, leading to the
trichotomy.  Our characterization provides a construction method for
saturated models, using fields of generalized power series.

\end{abstract}

 \maketitle


\vs
\section {Introduction}
Let $\mathcal L$ be a language containing $<$, an expansion of a
real closed field $\M = \langle M, +,\cdot , 0,1, <, \dots \rangle$
is o-minimal if every subset of $M$ which is definable with
parameters in $M$ is a finite union of intervals in $M$. For basic
definitions and properties of o-minimal theories see
\cite{speissegger}.

In this paper we are interested in a valuation theoretic
characterization of the $\A_{\alpha}$-saturated models of the
o-minimal theory of $\M\>$. For the notion of Hausdorff's
$\eta_{\alpha}$-sets, see \cite{rosenstein}. Denote by $|A|$ the
cardinality of $A$, and by $|\mathcal L|:=\ell $ that of the
language. Let $\dcl(A)$ denote the definable closure of $A\subset
M$. We have: $\>|\dcl(A)|\> = \> \max \{|A|\>, \ell\}\>$.
O-minimality implies that a formula $\varphi(x\>,\overline{a})$ in
one variable $x$ and parameters $\overline{a}$ is equivalent to a
disjunction of intervals with endpoints in $\dcl(\overline{a})\>$,
thus realizing a type in one variable $x$ and parameters in $A$
amounts to realizing a cut in the $\dcl(A)$. Let $|A| <
\aleph_{\alpha}$. In the case when $\aleph_{\alpha}
> \ell$, then also $|\dcl(A)| < \aleph_{\alpha}$. Therefore in that case $\M$
is $\aleph_{\alpha}$-saturated if and only if it is
$\aleph_{\alpha}$-saturated as a linear order, that is an
$\eta_{\alpha}$-set. So our result gives also a characterization of
$\eta_{\alpha}$-models (for $\alpha$ with $\aleph_{\alpha}
> \ell\>$), extending \cite{allingkuhlmann}.

We assume familiarity with the notions of independence and dimension
in o-minimal structures. We note that $\> \dim ( \dcl(A)) \leq
|A|\>$. For notions and results concerning power bounded o-minimal
expansions needed below see \cite{speissegger}. We shall need the
following fact. An o-minimal theory admits (up to isomorphism) a
unique prime model $\mathcal{P}$ with underlying set $\dcl
(\emptyset) =P$. For example, if $\M$ is a divisible abelian group
then $\dcl(A)$ in the $\mathbb Q$-vector space generated by $A$, the
prime model is $\mathbb{Q}$, and the dimension is the $\Q$-linear
dimension. If $\M$ is a real closed field then $\dcl(A)$, the
relative algebraic closure of the field $\mathbb Q(A)$ in $M$, the
prime model is $\mathbb Q^{rc}$, the field of real algebraic
numbers, and the dimension is the (absolute) transcendence degree.

Our main result in the polynomially bounded case, that is, power
bounded with archimedean field of exponents, is Theorem
\ref{expansions} (see Remark \ref{napm} for the power bounded case).

Note that condition (1) in Theorem \ref{expansions} can be replaced
by the equivalent valuation theoretic characterization of
$\aleph_{\alpha}$- saturated ordered $Q$-vector spaces, which we
provide in Theorem \ref{doag}. Theorem \ref{expansions}
generalizes \cite[Theorem 6.2]{kkmz} which
treated the special case when $\M$ is just a real closed field.

\section{The case of ordered $Q$-vector spaces}
\label{DOAG} Throughout this section we assume $Q$ to be an
Archimedean ordered field, so $Q$ is a subfield of $\mathbb R$. We
recall some general definitions about ordered $Q$-vector spaces, see
\cite{book} for more details. Let $G$ be an ordered $Q$-vector
space, for any $x\in G$ let $|x|=\max \{x,-x\}$.  For non-zero $x, y
\in G$ we say that $x$ is $Q-$archimedean equivalent to $y$  if
there exists $q \in Q$ such that $q|x| \geq |y| $ and $ q|y| \geq
|x|. $ We denote this relation by $\sim_Q$. We write $x<<_Qy$ if
$q|x| < |y|$ for all $q \in Q$. Clearly,  $\sim_Q$ is an equivalence
relation.
 Let $\Gamma : = \{[x] : x \in
G\>,\> x\not=0 \>\}$ the set of equivalence classes. We define a
linear order $<$ on $\Gamma$ as follows, $[y]\, < [x] $ if $x << y$
(notice the reversed order). The valuation associated to $G$ as an
ordered $Q$-vector space is the map $v_Q \colon G\ \longrightarrow
\Gamma \cup \{\infty\} $ defined by  $v_Q(0)= \infty $ and  $v_Q(x)=
[x]$  if $x \neq 0$. It satisfies the following axioms:
$v_Q(x)=\infty$ if and only if $x=0$; $v_Q(qx)=v_Q(x)$ for all
$q\not=0$; and $v_Q(x-y)\geq\min\{v_Q(x)\>, v_Q(y)\>\}$ (and
equality holds for distinct values).
 We call $\Gamma$ the value set of $G$.

We shall need the following valuation inequality for ordered
$Q$-vector spaces. For the discussion below, let us fix a set of
representatives $0\not=g_{\alpha}\in G$ such that
$\gamma_{\alpha}=v_Q(g_{\alpha})$.
 \begin{lem}
The cardinality of the value set $\Gamma$ is less or equal than the
dimension of $G$ over $Q$, i.e. $ |\Gamma | \leq \dim_Q(G)$.
\end{lem}
\begin{proof}
Let $ \Gamma=\{ \gamma_{\alpha}: \alpha <\kappa\}$ and
$|\Gamma|=\kappa$, for a cardinal $\kappa$. We claim that
$\{g_{\alpha}:\alpha <\kappa \}$ are $Q$-independent. If not, there
are $g_1, \ldots ,g_n$ and $q_1, \ldots q_n\in Q^{\times}$ such that
$\sum_{i=1}^nq_ig_i=0$. Now since the values are pairwise distinct,
we have $\infty = v_Q(\sum_{i=1}^nq_ig_i)=\min \{
v_Q(g_i):i=1,\ldots ,n\} $, a contradiction.
\end{proof}
For every $\gamma \in \Gamma$, fix $A_{\gamma}$ a maximal
archimedean $Q$-subspace of $G$ containing $g_{\gamma}$,
the archimedean component associated to $\gamma$.
We recall that, for a limit ordinal $\lambda$, a sequence
$(a_{\rho})_{ \rho < \lambda}$ is pseudo Cauchy if for every $\rho <
\sigma < \tau < \lambda$ we have $ v_Q(a_{\sigma} - a_{\rho})\ <\
v_Q(a_{\tau} - a_{\sigma})\>$,
and $a\in G$ is a pseudo limit if for all $\rho < \lambda\>, \>
v_Q(a - a_{\rho}) = v_Q(a_{\rho + 1} - a_{\rho})\>.$

We now give a characterization of $\A_{\alpha}$-saturation for
ordered $Q$-vector spaces in the language $\mathcal L_{Q}$    of
ordered groups $\mathcal L_{OG}$ expanded with constants for the elements of $Q$, i.e.
$\mathcal L_{Q}=\mathcal L_{OG}\cup \{c_q:q\in Q\}$. This is a
generalization of  the characterization for divisible ordered
abelian groups, see \cite{sgr}.
\begin{theo}
\label{doag} Let $G$ be an ordered $Q$-vector space. Then $G$ is
$\A_{\alpha}$-saturated in the language $\mathcal L_{Q}$ if and only

\begin{enumerate}

\item
its value set $\Gamma $ is an $\eta_{\alpha}$-set


\item
all its Archimedean components are isomorphic to $\R$

\item
every pseudo Cauchy sequence in a $Q$-subspace of $G$ of dimension
$<\A_{\alpha}$ has a pseudo limit in $G$.

\end{enumerate}
\end{theo}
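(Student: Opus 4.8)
The plan is to run both directions through quantifier elimination for $\mathrm{DOAG}$. I will use three standard facts without further comment: $\mathrm{DOAG}$ eliminates quantifiers in $\{+,-,0,<\}$, so a complete $1$-type over a set $A$ is exactly a cut in the divisible hull $H:=\langle A\rangle_{\Q}$ (and $|H|=\max(|A|,\aleph_0)$, hence $|v(H)|\le|H|$); a structure is $\aleph_\alpha$-saturated iff it realizes every $1$-type over every parameter set of size $<\aleph_\alpha$ (the step from $n$-types to $1$-types is the usual induction on $n$); and the elementary structure theory of the natural valuation, in particular the Archimedean components $A_\gamma\le\R$ from Section~\ref{DOAG} and their associated leading-coefficient maps. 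Thus the theorem reduces to: $G$ realizes every cut in every divisible $H\le G$ with $|H|<\aleph_\alpha$ if and only if (1)--(3) hold.

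For necessity I would assume $\aleph_\alpha$-saturation and treat each clause separately. Clause (1): given $P<_\Gamma Q$ in $\Gamma$ of size $<\aleph_\alpha$, lift to representatives in $G$; since $P<_\Gamma Q$ means every $Q$-representative is $\ll$ every $P$-representative, the type placing a positive $x$ archimedean-strictly between the $Q$- and the $P$-representatives is finitely satisfiable in $G$ by density and no endpoints, so it is realized and $v(x)$ falls strictly $<_\Gamma$-between $P$ and $Q$. Clause (2): a component $A_\gamma$ is a divisible, hence dense, subgroup of $\R$; were it proper, pick $r\in\R\setminus A_\gamma$ and monotone $\N$-sequences in $A_\gamma$ converging to $r$ from both sides, pull them back to $G$, and realize (by saturation, over $\aleph_0$ parameters) the resulting type — its realization would have value $\gamma$ and leading coefficient $r\notin A_\gamma$, which is impossible; so $A_\gamma$ is Dedekind complete, i.e.\ $A_\gamma=\R$. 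Clause (3): a pseudo-Cauchy $(a_\rho)_{\rho<\lambda}$ in such an $H$ has strictly $<_\Gamma$-increasing breakpoints $v(a_{\rho+1}-a_\rho)\in v(H)$, so $\operatorname{cf}(\lambda)\le|v(H)|<\aleph_\alpha$; after passing to a cofinal subsequence $(a_{\rho_i})$ (with the same pseudo-limits), the type $\{\,n|x-a_{\rho_{i+1}}|<|x-a_{\rho_i}|:i\ge 0,\ n\ge1\,\}$ forces $x$ to be a pseudo-limit (a short induction on $i$ with the ultrametric inequality), has $<\aleph_\alpha$ parameters, and is finitely satisfiable in $G$ via far-out terms of the sequence, so saturation realizes it.

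Sufficiency is the main direction and the technical core. Assuming (1)--(3), I would take a cut $(L,R)$ in a divisible $H\le G$ with $|H|<\aleph_\alpha$ not realized in $H$, fix a realizer $x$ in an elementary extension, and split on $S:=\sup\{v(x-h):h\in H\}$, which lies in $\Gamma$ (otherwise the cut is principal in $H$). If $S$ is attained, say at $h_0$, translate by $h_0$ so that $v(x)=S=:\gamma_0$ and $v(x-h)\le\gamma_0$ for all $h\in H$. When $\gamma_0\notin v(H)$: $\gamma_0$ fills a cut of $v(H)$ with both sides of size $<\aleph_\alpha$, so (1) gives $\gamma^*\in\Gamma$ filling it, and any $y\in G$ with $v(y)=\gamma^*$ and the same sign as $x$ realizes $(L,R)$, because for every $h\in H$ the value $v(h)$ equals neither of $\gamma_0,\gamma^*$ and is not $<_\Gamma$-between them, so $y-h$ has the same sign as $x-h$. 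When $\gamma_0\in v(H)$: by (2) the leading coefficient $\bar x\in A_{\gamma_0}^G=\R$ fills a Dedekind cut of the subgroup of $\R$ of leading coefficients at $\gamma_0$ arising from $H$, and any $y\in G$ with $v(y)=\gamma_0$ and $\bar y=\bar x$ realizes $(L,R)$ by the same sign bookkeeping one level down. If $S$ is not attained, refine to a strictly $v$-increasing net $(h_\rho)$ in $H$ with $h_\rho\to x$, all eventually on one side of the cut; this is pseudo-Cauchy in $H$, has no pseudo-limit in $H$ (such a limit would force $S$ to be attained) and has $x$ as a pseudo-limit, so (3) yields a pseudo-limit $x^*\in G$, and tracking the signs of $x^*-h$ through the ultrametric inequality shows $L<x^*<R$.

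The points I expect to cost the most care are: that these three cases are genuinely exhaustive and that ``$S$ not attained'' is exactly the pseudo-Cauchy situation with no pseudo-limit in $H$; that the \emph{single} element produced in each case pins down the \emph{whole} cut (not merely some coarser piece of it); and that the reversed order on $\Gamma$ and all the signs are carried correctly through the valuation computations. None of this goes beyond the material recalled in Section~\ref{DOAG}, but it must be done meticulously.
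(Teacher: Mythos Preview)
The paper does not give its own proof of this theorem: it is quoted from \cite{sgr} and immediately followed only by the remark that for $\aleph_0$ conditions (3) becomes vacuous. There is therefore no proof in the paper to compare against directly.

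That said, your argument is correct and is precisely the template the paper \emph{does} carry out in detail for the generalization, Theorem~\ref{expansions}. Your trichotomy on whether $S=\sup\{v(x-h):h\in H\}$ is attained, and if so whether the maximum lies in $v(H)$, is exactly the paper's split into the \emph{immediate transcendental}, \emph{value transcendental}, and \emph{residue transcendental} cases; and your necessity arguments for (1)--(3) parallel those in the second half of that proof (in particular, your type for clause~(3) is a harmless variant of the paper's $q_1(x)=\{\,n|x-a_{\nu+1}|<|a_\nu-a_{\nu+1}|\,\}$). One small slip worth tightening: your displayed reduction ``$G$ realizes every cut in every divisible $H\le G$ with $|H|<\aleph_\alpha$'' is literally false at $\alpha=0$, since $|H|=\max(|A|,\aleph_0)$ as you yourself note; the correct bookkeeping (which you in fact use downstream) is $\dim_{\Q}H<\aleph_\alpha$, whence $|v(H)|<\aleph_\alpha$, and this is exactly why condition~(3) is vacuous at $\aleph_0$.
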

 We  note that the characterization provides a method for
constructing $\A_{\alpha}$-saturated ordered $Q$-vector spaces,
using Hahn group constructions (see \cite{book}).
Let $\Gamma$ be any $\eta_{\alpha}$-set, and let $G=
\Pi_{\Gamma}\mathbb R$ the Hahn product. Then $G$ is
$\A_{\alpha}$-saturated.



\section{The case of power bounded o-minimal expansions}\label{main}
\label{RCF} \medskip\noindent
 If  $R$ is an ordered field
then its natural valuation $v$ has valuation ring  the convex hull
of $\Q$ in $R\>$.  The residue field $k$ is archimedean, i.e.
(isomorphic to) a subfield of $\mathbb R$. We denote the value group
$v(R)$ by $G$. Note that $G$ is divisible and its rational rank is
the linear dimension as a $\mathbb Q$-vector space. See \cite{book}
for details. A characterization of $\A_{\alpha}$-saturated real
closed fields was obtained in \cite[Theorem 6.2]{kkmz}. We note that
in the proof of \cite[Theorem 6.2]{kkmz}, the dimension inequality
(the rational rank of the value group is bounded by the absolute
transcendence degree; see \cite{ershov}) is crucially used. In this
Section, we prove a generalization of this result to o-minimal
expansions of a real closed field $\mathcal M=(M, +,\cdot
,0,1,<,\cdots)\>.$ Recall that the expansion $\mathcal M$ is power
bounded if for each definable function $f:\mathcal M \rightarrow
\mathcal M $ there is $\lambda \in M$ such that $|f(x)|\leq
x^{\lambda}$ for all sufficiently large $x>0$ in $M$. We denote the
field of exponents (of all definable power functions) by $Q$.
\begin{rem}\label{tconvx2}
A power bounded $\mathcal M$ is polynomially bounded if and only if
$Q$ is archimedean. We shall assume that the prime model (of the
theory of $\mathcal M$) is archimedean. In that case, the valuation
ring of the natural valuation is $T$-convex and the value group $G$
is an ordered $Q-$vector space. The following analogue of the
dimension inequality (the valuation inequality) holds: $\dim_Q G\leq
\dim(\mathcal M)$. In particular, $\>\dim_Q(v(\dcl(A)))\leq \dim (
\dcl(A)) \leq |A|.$ See \cite{speissegger} for details.
\end{rem}
\begin{theo} \label{expansions}
Let $\M = \langle M, <, +, \cdot, \dots \rangle$ be a polynomially
bounded o-minimal expansion of a real closed field, $v$ its natural
valuation, $G$ its value group, $k$ its residue field. We assume
that its prime model $\mathcal P$ is archimedean. Then $\M$ is
$\A_{\alpha}$-saturated if and only if
\begin{enumerate}
\item $G$ is $\A_{\alpha}$-saturated as an
ordered $Q-$vector space
\item $k \cong \R$
\item for every substructure $\mathcal M^{\prime}$ with
$\dim (\mathcal M^{\prime}/\mathcal P)<\A_{\alpha}$, every pseudo Cauchy sequence in $M^{\prime}$ has a pseudo limit in $M$.
\end{enumerate}
\end{theo}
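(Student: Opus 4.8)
The plan is to follow the same pattern as Theorems~\ref{doag} and~\ref{rcf}, with van den Dries' dimension inequality (Theorem~\ref{inequalityvvd}) playing the role that the classical dimension inequality played in \cite{kkmz}; this is the only place where power-boundedness enters. First I would record the tools used throughout: in an o-minimal theory $\dcl=\acl$ is a pregeometry; $\dcl(A)$ is the prime model over $A$, hence a model of $T:=Th(\M)$, in particular a real closed field with divisible value group; complete $1$-types over $A$ and over $\dcl(A)$ coincide; over a $\dcl$-closed set $N$ a non-realized complete $1$-type is just a Dedekind cut of $N$; and the extension of an ordered valued field by a pseudo limit of an immediate pseudo Cauchy sequence is again immediate, the cut being realized exactly by the pseudo limits.

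For necessity, assume $\M$ is $\A_\alpha$-saturated. Condition (2) already follows from $\A_0$-saturation: given $r\in\R$, extend $\{\,p<x<q:\ p,q\in\Q,\ p<r<q\,\}$ to a complete $1$-type over $\emptyset$ and realize it by some $y\in M$; then $v(y)=0$ and $y$ has residue $r$, and since the residue field $k$ embeds canonically in $\R$ this forces $k=\R$. Condition (1) is a routine lifting argument: for $\Gamma_0\subseteq G$ with $|\Gamma_0|<\A_\alpha$ and a cut of $\langle\Gamma_0\rangle_{\Q}$, choose $x_\gamma\in M$ with $v(x_\gamma)=\gamma$, translate the cut (using that $M$ is real closed, so rational powers of products of the $x_\gamma$ are available) into a partial type over $\{x_\gamma:\gamma\in\Gamma_0\}$, which is finitely satisfiable with fewer than $\A_\alpha$ parameters, realize it in $M$ by $\A_\alpha$-saturation, and project by $v$. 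For condition (3), let $\M'$ be a substructure with $\dim(\M'/\mathcal P)<\A_\alpha$ and $(a_\rho)_{\rho<\lambda}$ a pseudo Cauchy sequence in $M'$. If $\alpha\ge 1$ then $|M'|\le\max(\A_0,\dim(\M'/\mathcal P))<\A_\alpha$, the partial type ``$x$ is a pseudo limit of $(a_\rho)$'' is finitely satisfiable (by the Fact after the definition of pseudo Cauchy sequence) and uses fewer than $\A_\alpha$ parameters, so $\A_\alpha$-saturation provides a pseudo limit in $M$. If $\alpha=0$ then $\dim(\M'/\mathcal P)$ is finite, hence $N:=\dcl(M')=\dcl(\bar b)$ for a finite tuple $\bar b\subseteq M'$; taking a pseudo limit $x$ of $(a_\rho)$ in some $\M^{\ast}\succ\M$, either $x\in\dcl^{\M^{\ast}}(M')=N\subseteq M$ and we are done, or there is no pseudo limit in $N$, whence $\operatorname{tp}(x/N)=\operatorname{tp}(x/\bar b)$ is a complete type over a finite set, realized in $M$ by $\A_0$-saturation, and its realization, having the same cut over $N$ as $x$, is a pseudo limit of $(a_\rho)$ in $M$.

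For sufficiency, assume (1)--(3), let $p$ be a complete $1$-type over $A\subseteq M$ with $|A|<\A_\alpha$ ($A$ finite when $\alpha=0$), put $N:=\dcl(A)$ (a model of $T$ with $\dim(N/\mathcal P)\le|A|$), and regard $p$ as a complete $1$-type over $N$. If $p$ is realized in $N\subseteq M$ we are done; otherwise $p$ is a cut $C$ of $N$, and I would fix a realization $a$ in some $\M^{\ast}\succ\M$ and run the classical trichotomy on $a$ over $N$ through $\{v(a-c):c\in N\}$. If this set has a maximum $\gamma_0=v(a-c_0)\notin v(N)$, then $C$ is determined by $c_0$, $\gamma_0$ and the sign of $a-c_0$; since $v(N)$ is divisible of rational rank $\le|A|$ --- finite, by Theorem~\ref{inequalityvvd}, when $\alpha=0$, hence finitely generated over $\Q$, while $|v(N)|<\A_\alpha$ when $\alpha\ge 1$ --- condition (1) yields $\gamma'\in G$ realizing the cut of $\gamma_0$ over $v(N)$, and $a':=c_0+b'$ with $b'\in M$, $v(b')=\gamma'$ and the right sign, realizes $p$. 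If the maximum is $\gamma_0=v(a-c_0)\in v(N)$ but the residue of $(a-c_0)m^{-1}$ ($m\in N$, $v(m)=\gamma_0$) lies outside the residue field $k_N$ of $N$, then $C$ is determined by the cut of that residue over $k_N$, which by (2) is realized by some $r\in\R=k$, and $a':=c_0+mb'$ with $b'\in\mathcal O_M$ of residue $r$ realizes $p$. Finally, if $\{v(a-c):c\in N\}$ has no maximum, then $a$ is a pseudo limit of a pseudo Cauchy sequence $(a_\rho)$ in $N$ with $v(a-a_\rho)$ increasing cofinally, the extension $N\subseteq N(a)$ is immediate, and (3) applied to $N$ gives a pseudo limit $a'\in M$ of $(a_\rho)$ which, having the same cut over $N$ as $a$, realizes $p$.

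The hard part is the sufficiency case analysis and its classical input: that every non-realized cut over $N$ falls into exactly one of these three cases; that in the first two cases the cut is pinned down by the displayed finite data, so that the element $a'$ built in $M$ genuinely realizes $p$; and that in the third case the realizations of $C$ are exactly the pseudo limits of $(a_\rho)$. This is the standard theory of immediate extensions of valued fields, carried out for real closed fields in \cite{kkmz}; what remains to check is that nothing beyond ``$\M$ is a power-bounded o-minimal expansion of a real closed field'' is used, power-boundedness entering only through Theorem~\ref{inequalityvvd} to keep the rational rank of $v(\dcl(A))$ finite in the $\A_0$-case.
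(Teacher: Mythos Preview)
Your proposal is correct and follows essentially the same approach as the paper: both directions hinge on the immediate/value-transcendental/residue-transcendental trichotomy for the cut of a realization over $N=\dcl(A)$, with conditions (3), (1), (2) handling the three cases respectively, and van den Dries' inequality (Theorem~\ref{inequalityvvd}) invoked exactly once, in the value-transcendental case when $\alpha=0$, to reduce the cut in $v(N)$ to one over a finite $\Q$-basis. The only cosmetic differences are that for necessity of (2) you argue directly via rational approximations while the paper quotes Theorem~\ref{doag}, and for necessity of (3) in the $\aleph_0$-case you realize $\operatorname{tp}(x/\bar b)$ for a pseudo limit $x$ in an elementary extension, whereas the paper rewrites the pseudo-limit type itself over the finite basis; these are equivalent reformulations of the same idea.
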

\begin{proof}
The proof follows the lines of the proof of \cite[Theorem 6.2]
{kkmz}, and we shall only point out the main adjustments which
concern the cardinality of the expanded language, and bounding the
dimension of definable closures. We begin by observing that the
proof of necessity is analogous to that of \cite[pp.88-89]{kkmz};
one needs to take $Q$-linear spans instead of $\Q$-linear spans. Now
assume conditions (1), (2) and (3) and we show that $\M$ is
$\A_{\alpha}$-saturated. Let $q$ be a complete $1$-type over $\M$
with parameters in $A \subset M$, with $|A|<\A_{\alpha}$.
Let $\M^{''}$ be an elementary extension of $\M$ in which $q(x)$ is
realized, and $x_0 \in M^{''}$ such that $\M^{''} \models q(x_0)$.
As noted earlier to realize $q$ in $\M$ it is necessary and
sufficient to realize the cut that $x_0$ makes in $\M^{\prime}:=
\dcl(A)\subseteq \M$
$$
q^{\prime}(x) := \{b \leq x \, ; \, b \in M^{\prime}, \, q \vdash b
\leq x \} \cup \ \{x \leq c \, ; \ c \in M^{\prime}, \, q \vdash x
\leq c \}.
$$
 As we will see in realizing the
cut $q^{\prime}$ instead of the type $q$ some case distinction will
be needed according to whether $\aleph_{\alpha} > \ell$ (so
$|\dcl(A)| < \aleph_{\alpha}$) or $\aleph_{\alpha} \leq \ell$ (so
$|\dcl(A)| \geq \aleph_{\alpha}$). As in the proof of \cite[Theorem
6.2]{kkmz} we set $B := \{b \in M^{\prime} \, ; \,  q \vdash b < x\}
\mbox{ and } C := \{c \in M^{\prime} \, ; \,  q \vdash x < c\}$. Consider
 $\Delta = \{v(d - x_0) \, | \, d \in M^{\prime} \}$ and
the three possible cases:
\begin{enumerate}
\item[$(a)$] {\em Immediate transcendental case}: $\Delta$ has no largest element.
\item[$(b)$] { \em Value transcendental case}: $\Delta$ has a largest element $\gamma \not \in v(M^{\prime})$.
\item[$(c)$] { \em Residue transcendental case}: $\Delta$ has a largest element $\gamma \in v(M^{\prime})$.
\end{enumerate}
We deal with case (a)  as in \cite[p. 86]{kkmz}, taking into account
that we get a pseudo-Cauchy sequence in $M^{\prime}$ and that $\dim
(\mathcal M / \mathcal P) \leq |A|< \aleph_{\alpha}$. \noindent In
case $(b)$ we need to deal with the type
$$t(y) = \{v(c - d_0) < y ; c \in C \} \cup
\{y < v(b - d_0); b \in B, b > d_0\}
$$
over $G$ with parameters in $v(M^{\prime})$ as in \cite[p.
87]{kkmz}.
 Set $G^{\prime}=v(M^{\prime})$. If  $\A_{\alpha}>\ell$ then $|(G^{\prime}|<\A_{\alpha}$ and by hypothesis $(1)$ we can realize $t(y)$ in $G$.
 The delicate case is when $\A_{\alpha}\leq\ell$. By Remark
 \ref{tconvx2}, $\dim_Q(v(G^{\prime})\leq \dim (\mathcal M^{\prime}/\mathcal P) \leq |A| <\aleph_{\alpha}\>.$
 Fixing a $Q$-basis of $G^{\prime}$ of cardinality $< \aleph_{\alpha}
 $, we can rewrite $t(y)$ as a type $t^{\prime}(y)$ with parameters now from the basis. Since $G$ is
$\A_{\alpha}$-saturated,  we can realize $t^{\prime}(y)$ in $G$. The
remaining of the argument, as well as the argument for case $(c)$
are as in \cite[pp. 87-88]{kkmz}.
\end{proof}
We note that the characterization provides a construction method for
$\A_{\alpha}$-saturated polynomially bounded o-minimal expansions of
real closed fields, using (the maximally valued) fields of power
series.
\begin{ex}
{\rm Consider the uncountable language $\mathcal L_{an}$ of ordered
fields expanded with symbols for restricted analytic functions.
Construct the power series field  $\R((G))$ (see \cite{book}) where
$G$ is the $\ell$-saturated group constructed at the end of previous
section. The field $\R((G))$ can be endowed with a polynomially
bounded o-minimal $\mathcal L_{an}$-structure (see
\cite{speissegger}). By Theorem \ref{expansions} it is
$\ell$-saturated.}
\end{ex}
\begin{rem} \label{napm}
(1) Assume that $\M$ is polynomially bounded but the prime model is
non archimedean. Consider the coarsening $w$ of $v$ whose valuation
ring is the convex hull of the prime model in $M$. Working with this
valuation, one gets an analogue of Theorem \ref{expansions}. \\ (2)
Assume now that $\M$ is power bounded, but not polynomially bounded,
i.e. the field of exponents $Q$ is non-archimedean. In this case,
working with the induced valuation $w_Q$ on the ordered $Q$-vector
space $w_Q(K)$, one gets  analogues of Theorems \ref{doag} and \ref{expansions}.
\end{rem}


\vs


\begin{thebibliography}{KKMZ02}


\bibitem {allingkuhlmann} \publ
{N. L. \ Alling and  S. \ Kuhlmann} {On $\eta_{\alpha}$-Groups and Fields}{Order}{11}{1994}{85-92}\\


\bibitem{ershov}
\book {Y. L. Ershov} {Multi-Valued Fields}, {Springer,} {2001.}\\


\bibitem{kkmz} \publ {F.-V. Kuhlmann, S. Kuhlmann, M. Marshall, M. Zekavat} {Embedding ordered fields in formal power series fields}
{J. Pure Appl. Algebra}{169} {2002} {71--90} \\

\bibitem{sgr} \publ {S. Kuhlmann} {Groupes ab$\acute e$liens divisibles ordonn$\acute e$s}
{S$\acute e$minaire sur les Structures Alg$\acute e$briques Ordonn$\acute e$es, S$\acute e$lection d'expos$\acute e$s 1984-1987}
{Vol.1}{1990}{3--14} \\

\bibitem{book} \book {S. Kuhlmann} {Ordered Exponential Fields}, The Fields Institute Monograph Series, vol 12. AMS 2000. \\

\bibitem{speissegger}\book {P. Speissegger}{Lectures on o-minimality} in Lectures on Algebraic Model
Theory,  B. Hart and M. Valeriote, Editors - The Fields Institute
Monograph Series, AMS 2002.\\

\bibitem{rosenstein}
\book  {J.G. Rosenstein} {Linear Orderings}, {Academic Press,} {1982.}\\





\end{thebibliography}
\end{document}